\documentclass[a4paper, 11pt, twoside, notitlepage]{amsart}

\usepackage{amsmath,amscd}
\usepackage{amssymb}
\usepackage{amsthm}
\usepackage{comment}
\usepackage{graphicx, xcolor}

\usepackage{mathrsfs}
\usepackage[ocgcolorlinks, linkcolor=blue]{hyperref}


\usepackage{bm}
\usepackage{bbm}
\usepackage{url}

\usepackage[utf8]{inputenc}
\usepackage{mathtools,amssymb}
\usepackage{esint}
\usepackage{tikz}
\usepackage{dsfont}
\usepackage{relsize}
\usepackage{url}
\urlstyle{same}
\usepackage{xcolor}
\usepackage{graphicx}
\usepackage{mathrsfs}
\usepackage[shortlabels]{enumitem}
\usepackage{lineno}
\usepackage{amsmath}
\usepackage{enumitem}
\usepackage{amsthm} 
\usepackage{verbatim}
\usepackage{dsfont}
\numberwithin{equation}{section}

\allowdisplaybreaks

\mathtoolsset{showonlyrefs}

\graphicspath{{images/}}

\newtheorem{theorem}{Theorem}[section]
\newtheorem{lemma}[theorem]{Lemma}

\title[Fractional anisotropic Calder\'on problem]{A local uniqueness theorem for the fractional Schr\"odinger equation on closed Riemannian manifolds}

\author[Y.-H.~Lin]{Yi-Hsuan Lin}
\address{Department of Applied Mathematics, National Yang Ming Chiao Tung University, Hsinchu, Taiwan \& Fakult\"at f\"ur Mathematik, University of Duisburg-Essen, Essen, Germany}
\email{yihsuanlin3@gmail.com}

\newcommand{\N}{{\mathbb N}}

\newcommand {\dVg} {\mathsf{dV}_g}

\newcommand{\LC}{\left(}
\newcommand{\RC}{\right)}
\newcommand{\wt}{\widetilde}






\begin{document}

	\maketitle
	\begin{abstract}
	We investigate that a potential $V$ in the fractional Schr\"odinger equation $\LC \LC -\Delta_g \RC^s +V \RC u=f$ can be recovered locally by using the local source-to-solution map on smooth connected closed Riemannian manifolds. To achieve this goal, we derive a related new Runge approximation property.
	
		\medskip
		
		\noindent{\bf Keywords.} Fractional Laplacian, nonlocal diffuse optical tomography, Runge approximation, simultaneous determination
		
		\noindent{\bf Mathematics Subject Classification (2020)}: 26A33, 35J10, 35R30.

	\end{abstract}
	
	
	\section{Introduction}\label{sec: introduction}

	Let $(M,g)$ be a smooth closed Riemannian manifold of dimension $\dim M\geq 2$, and $-\Delta_g$ be the Laplace-Beltrami operator defined on $M$. 
	It is known that the domain of $-\Delta_g$ is the standard Sobolev space $H^2(M)$. Let us also denote the eigenvalue of $-\Delta_g$ on $M$ with increasing order, by $0=\lambda_0<\lambda_1\leq \lambda_2\leq \ldots$, and the first eigenfunction $\phi_0\equiv 1$ (up to normalization). Consider $\LC\phi_{k}\RC_{k\in \N}$ to be an orthonormal basis for $\ker\LC -\Delta_g -\lambda_k\RC$ in $L^2(M)$ (with multiplicity), with respect to $\lambda_k$, for $k\in \N$. 
	Let us define the $L^2$ inner product on $M$ as $\LC f,g\RC_{L^2(M)}:=\int_M fg\, \dVg$, where $\dVg$ denotes the natural volume element on $(M,g)$.
	Given $s\in (0,1)$, the fractional Laplace-Beltrami on $M$ of order $s$ as an unbounded self-adjoint operator can be defined via the spectral theorem
	\begin{equation}
		\begin{split}
			\LC -\Delta_g \RC^s u(x) = \sum_{k=0}^\infty \lambda_k^{s} \LC u, \phi_k\RC_{L^2(M)}\phi_k(x)
		\end{split}
	\end{equation}
	in the domain 
	\begin{equation}
		\begin{split}
			\mathcal{D}\LC \LC -\Delta_g \RC^s \RC : = \bigg\{u\in L^2(M) : \, \sum_{k=0}^\infty \lambda^{2s}_k \big|  \LC u, \phi_k\RC_{L^2(M)} \big|^2<\infty \bigg\}=H^{2s}(M).
		\end{split}
	\end{equation}
	The fractional Laplace-Beltrami operator $\LC -\Delta_g\RC^s$ can be viewed as a linear map from $H^{a}(M)\to H^{a-2s}(M)$, for any $a\geq 0$.
	
	Let us characterize our mathematical model in this work. Let $(M,g)$ be a smooth closed Riemannian manifold, and $V=V(x)\in C^\infty(M)$ be a nonnegative potential. Consider 
	\begin{equation}\label{equ: main}
		\LC \LC -\Delta_g \RC^s+V \RC u =f \text{ in }M. 
	\end{equation}
	Via the eigenvalue condition, it is also known that $\LC (-\Delta_g) u , 1 \RC_{L^2(M)}=0$.
	With the above assumptions, by standard regularity theory, it is not hard to see that there exists a unique solution $u_f \in C^\infty(M)$ of the equation \eqref{equ: main}. Let $\mathcal{O}\subset M$ be a nonempty open set and $f\in C^\infty_c(\mathcal{O})$. Then one can define the local \emph{source-to-solution map} $L_{V,\mathcal{O}} $ of \eqref{equ: main} via 
	\begin{equation}\label{local source-to-solution map}
		L_{V,\mathcal{O}} : C^\infty_c(\mathcal{O}) \ni f \mapsto \left. u_f \right|_{\mathcal{O}}\in C^\infty(\mathcal{O}),
	\end{equation}
	where $u_f\in C^\infty(M)$ is the solution to \eqref{equ: main}. We want to recover the potential $V$ by using the local measurement $L_{V,\mathcal{O}}$. We can prove the next theorem:
	
	\begin{theorem}\label{Thm: potential}
		Given $s\in (0,1)$, let $(M,g)$ be smooth closed connected Riemannian manifolds of dimension $n \geq 2$, and $0\leq V_j\in C^\infty(M)$, for $j=1,2$. Let $\mathcal{O}\Subset M$ be an open set. Suppose that either $V_1=V_2$ in $\mathcal{O}$ or $V_1=V_2$ in $M\setminus \mathcal{O}$, then 
		\begin{equation}\label{s-t-s potential}
			L_{V_1,\mathcal{O}}(f)=	L_{V_2,\mathcal{O}} (f), \text{ for all }f \in C^\infty_c(\mathcal{O}).
		\end{equation}
		implies $V_1=V_2$ in $M$.
	\end{theorem}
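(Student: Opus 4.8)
\emph{Strategy.} I will use the standard two-ingredient scheme for fractional Calder\'on-type problems---an Alessandrini-type integral identity together with a Runge approximation property---adapted to the closed-manifold, source-to-solution setting. Since $V_j\geq 0$ and $V_j\not\equiv 0$ (injectivity of $(-\Delta_g)^s+V_j$, which is needed for $u_f$ to be well-defined, rules out $V_j\equiv 0$), the operator $P_j:=(-\Delta_g)^s+V_j$ with domain $H^{2s}(M)$ is a positive self-adjoint operator with compact resolvent, hence has a bounded inverse $P_j^{-1}:L^2(M)\to H^{2s}(M)$ preserving $C^\infty(M)$. Thus $u_f=P_j^{-1}f$ and $L_{V_j,\mathcal O}f=(P_j^{-1}f)|_{\mathcal O}$, and since $P_j^{-1}$ is self-adjoint and $f$ is supported in $\mathcal O$, the map $L_{V_j,\mathcal O}$ is self-adjoint on $L^2(\mathcal O)$.

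\emph{Step 1: Alessandrini-type identity.} Fix $f_1,f_2\in C^\infty_c(\mathcal O)$ and set $u_j:=P_j^{-1}f_j\in C^\infty(M)$. Using self-adjointness of $(-\Delta_g)^s$ and that $f_1,f_2$ are supported in $\mathcal O$, one computes
\begin{equation}
\int_M (V_1-V_2)\,u_1 u_2\,\dVg=\langle f_1,u_2\rangle_{L^2(M)}-\langle u_1,f_2\rangle_{L^2(M)}=\langle f_1,L_{V_2,\mathcal O}f_2\rangle_{L^2(\mathcal O)}-\langle L_{V_1,\mathcal O}f_1,f_2\rangle_{L^2(\mathcal O)}.
\end{equation}
If $L_{V_1,\mathcal O}=L_{V_2,\mathcal O}$, then by the self-adjointness of these maps the right-hand side vanishes, so
\begin{equation}
\int_M (V_1-V_2)\,(P_1^{-1}f_1)(P_2^{-1}f_2)\,\dVg=0\qquad\text{for all }f_1,f_2\in C^\infty_c(\mathcal O).
\end{equation}

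\emph{Step 2: Runge approximation and conclusion.} Set $\Sigma:=\mathcal O$ in the case $V_1=V_2$ on $M\setminus\mathcal O$, and $\Sigma:=M\setminus\overline{\mathcal O}$ in the case $V_1=V_2$ on $\mathcal O$ (if this latter set is empty the conclusion is immediate by continuity, so assume it is nonempty). In both cases $V_1-V_2$ vanishes off $\Sigma$, so $\|V_1-V_2\|_{L^2(\Sigma)}=\|V_1-V_2\|_{L^2(M)}$. I claim $\mathcal R_j:=\{(P_j^{-1}f)|_\Sigma:f\in C^\infty_c(\mathcal O)\}$ is dense in $L^2(\Sigma)$ for $j=1,2$. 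By Hahn--Banach it suffices to show that $h\in L^2(\Sigma)$, extended by zero to $M$, with $\langle h,P_j^{-1}f\rangle_{L^2(M)}=0$ for all $f\in C^\infty_c(\mathcal O)$, must vanish. Set $w:=P_j^{-1}h\in H^{2s}(M)$; self-adjointness gives $\langle w,f\rangle_{L^2(M)}=0$ for all such $f$, i.e. $w=0$ in $\mathcal O$, while $P_j w=h$ with $h=0$ a.e. off $\mathcal O$ when $\Sigma=\mathcal O$, and $h=0$ a.e. on $\mathcal O$ when $\Sigma=M\setminus\overline{\mathcal O}$. In the first case $hw=0$ a.e., so testing $P_j w=h$ against $w$ yields $\|(-\Delta_g)^{s/2}w\|_{L^2(M)}^2+\int_M V_j w^2\,\dVg=0$, whence $w$ is constant and, vanishing on the nonempty open set $\mathcal O$, $w\equiv 0$. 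In the second case $(-\Delta_g)^s w=-V_j w=0$ in $\mathcal O$ together with $w=0$ in $\mathcal O$, so the strong unique continuation property of $(-\Delta_g)^s$ on the connected closed manifold $M$ forces $w\equiv 0$. Either way $h=P_j w=0$, proving the claim. Finally, choose $f_k,\tilde f_k\in C^\infty_c(\mathcal O)$ with $(P_1^{-1}f_k)|_\Sigma\to V_1-V_2$ and $(P_2^{-1}\tilde f_k)|_\Sigma\to 1$ in $L^2(\Sigma)$; since $V_1-V_2\in L^\infty(M)$ and the two sequences are $L^2(\Sigma)$-bounded, the product $(P_1^{-1}f_k)(P_2^{-1}\tilde f_k)|_\Sigma\to V_1-V_2$ in $L^1(\Sigma)$, so passing to the limit in the identity of Step 1 gives $0=\int_\Sigma (V_1-V_2)^2\,\dVg=\|V_1-V_2\|_{L^2(M)}^2$, i.e. $V_1=V_2$ on $M$.

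\emph{Main obstacle.} The nontrivial point is the Runge approximation in the case $V_1=V_2$ on $\mathcal O$, which rests entirely on the strong unique continuation property for the fractional Laplace--Beltrami operator on a closed manifold: if $u$ and $(-\Delta_g)^s u$ vanish on a nonempty open set, then $u\equiv 0$. Establishing this requires a Caffarelli--Silvestre type degenerate elliptic extension on the cylinder $M\times(0,\infty)$, a boundary Carleman estimate at $M\times\{0\}$ in the spirit of R\"uland, and ordinary unique continuation in the connected cylinder; this is the technical heart of the argument, whereas the algebraic identity of Step 1 and the energy-based Runge case are comparatively soft.
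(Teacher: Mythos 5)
Your proof is correct, and it is worth separating the two cases when comparing it with the paper. For the case $V_1=V_2$ in $M\setminus\mathcal{O}$ you follow essentially the paper's route: the Alessandrini-type identity obtained from self-adjointness of $L_{V,\mathcal{O}}$, followed by a Runge approximation on $\mathcal{O}$ proved via Hahn--Banach, the adjoint equation $((-\Delta_g)^s+V)w=\varphi$, and the energy identity forcing $w$ to be constant and hence zero; the only cosmetic difference is that you prove density in $L^2(\mathcal{O})$ rather than $H^s(\mathcal{O})$, which together with your $L^1$ product estimate is all that the limit passage requires (and your observation that the energy identity only gives ``$w$ constant'', so that $w=0$ on $\mathcal{O}$ must be invoked to conclude $w\equiv 0$, is actually more careful than the paper's phrasing). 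For the case $V_1=V_2$ in $\mathcal{O}$ you genuinely diverge: the paper applies the UCP of Lemma \ref{Lemma: UCP} to the difference $u_1^f-u_2^f$ (which vanishes together with $(-\Delta_g)^s(u_1^f-u_2^f)$ on $\mathcal{O}$) to get $u_1^f=u_2^f$ globally, deduces $(V_1-V_2)u^f=0$ off $\mathcal{O}$, and then disposes of $V_1-V_2$ by a non-vanishing argument cited from \cite{GRSU18,LZ2024NDOT}; you instead reuse the same integral identity and establish a second, ``exterior'' Runge approximation --- density of $\{(P_j^{-1}f)|_{M\setminus\overline{\mathcal{O}}}: f\in C^\infty_c(\mathcal{O})\}$ in $L^2(M\setminus\overline{\mathcal{O}})$ --- whose dual formulation reduces precisely to the UCP. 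This unifies both cases under one scheme and replaces the paper's informal appeal to the literature with a self-contained argument, at the cost of proving a density statement in the complement that the paper never needs explicitly; both proofs ultimately rest on the same two pillars (the integral identity and Lemma \ref{Lemma: UCP}). Your side remark that injectivity of $(-\Delta_g)^s+V_j$ forces $V_j\not\equiv 0$ points at a genuine (if minor) gap in the paper's well-posedness lemma, which tacitly needs coercivity that fails for $V\equiv 0$.
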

	
	The proof of Theorem \ref{Thm: potential} relies on suitable integral identity and approximation property. Meanwhile, a variant result of Theorem \ref{Thm: potential} has been addressed in \cite{GLX} by using exterior measurements
	Throughout this paper, we always denote $(M,g)$ as a smooth, connected closed Riemannian manifold of $\dim M\geq 2$. 
	
	\medskip
	
	\noindent $\bullet$ \textbf{Related works.} Inverse problems for fractional/nonlocal equations have been paid attention in recent years. In the pioneering work \cite{GSU20}, the authors first proved a global uniqueness result of bounded potentials for the fractional Schr\"odinger equation by using the exterior Dirichlet-to-Neumann (DN) map. Afterward, there are many related results appeared in this direction, and we refer readers to  \cite{CLL2017simultaneously,GLX,CLL2017simultaneously,cekic2020calderon,CRTZ-2022,feizmohammadi2021fractional,FKU2024calder,KLW2021calder,RS17,RZ2022unboundedFracCald,LZ2024NDOT,GRSU18,KLZ-2022,KRZ-2023}). Let us point out that the essential methods in this area are \emph{unique continuation property} and \emph{Runge approximation}. More recently, one could also solve (local) inverse problems by using the nonlocal method (see \cite{LNZ2024}).

	\section{Preliminaries}\label{sec: prel}
	
	We first review a simple fact.
	
	\begin{lemma}\label{Lemma: integration by parts}
		There holds 
		\begin{equation}\label{equ: integration by parts}
			\begin{split}
				\LC  \LC -\Delta_g \RC^s u ,v  \RC_{H^{-s}(M)\times H^s(M)} &= \big( \LC -\Delta_g\RC^{s/2}u, \LC -\Delta_g \RC^{s/2}v \big)_{L^2(M)} \\
				&=	\LC u,  \LC -\Delta_g \RC^s v  \RC_{H^s(M)\times H^{-s}(M)},
			\end{split}
		\end{equation}
		for all $u,v \in H^s(M)$, where $H^{-s}(M)$ denotes the dual space of $H^s(M)$.
	\end{lemma}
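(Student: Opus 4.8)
The plan is to prove the identity \eqref{equ: integration by parts} by exploiting the spectral definition of $\LC -\Delta_g \RC^s$, which diagonalizes all the operators involved on the orthonormal eigenbasis $\LC \phi_k \RC_{k \in \N}$. First I would recall that for $u \in H^s(M)$ we may write $u = \sum_{k=0}^\infty u_k \phi_k$ with $u_k = \LC u, \phi_k \RC_{L^2(M)}$, and likewise $v = \sum_{k=0}^\infty v_k \phi_k$, where the defining conditions give $\sum_k (1+\lambda_k)^s |u_k|^2 < \infty$ and $\sum_k (1+\lambda_k)^s |v_k|^2 < \infty$. Since $\LC -\Delta_g \RC^{s/2} u = \sum_k \lambda_k^{s/2} u_k \phi_k$ belongs to $L^2(M)$ exactly because $\sum_k \lambda_k^s |u_k|^2 \leq \sum_k (1+\lambda_k)^s |u_k|^2 < \infty$, and similarly for $v$, the middle quantity $\big( \LC -\Delta_g \RC^{s/2} u, \LC -\Delta_g \RC^{s/2} v \big)_{L^2(M)}$ is a well-defined $L^2$ inner product, and by Parseval it equals $\sum_{k=0}^\infty \lambda_k^s u_k v_k$.

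Next I would interpret the outer two expressions. The operator $\LC -\Delta_g \RC^s$ maps $H^s(M)$ into $H^{-s}(M)$ continuously, and the duality pairing $\LC \cdot, \cdot \RC_{H^{-s}(M) \times H^s(M)}$ is the unique continuous extension of the $L^2$ inner product. Concretely, for $w \in H^{-s}(M)$ with expansion coefficients $w_k$ satisfying $\sum_k (1+\lambda_k)^{-s} |w_k|^2 < \infty$ and $v \in H^s(M)$, one has $\LC w, v \RC_{H^{-s}(M) \times H^s(M)} = \sum_{k=0}^\infty w_k v_k$, the series converging absolutely by Cauchy--Schwarz. Applying this with $w = \LC -\Delta_g \RC^s u$, whose coefficients are $\lambda_k^s u_k$ (and these do satisfy the $H^{-s}$ summability since $\sum_k (1+\lambda_k)^{-s} \lambda_k^{2s} |u_k|^2 \leq \sum_k (1+\lambda_k)^s |u_k|^2 < \infty$), yields
\begin{equation}
	\LC \LC -\Delta_g \RC^s u, v \RC_{H^{-s}(M) \times H^s(M)} = \sum_{k=0}^\infty \lambda_k^s u_k v_k.
\end{equation}
The third expression is handled identically, or simply by the symmetry $u \leftrightarrow v$ of the scalar series $\sum_k \lambda_k^s u_k v_k$, giving $\LC u, \LC -\Delta_g \RC^s v \RC_{H^s(M) \times H^{-s}(M)} = \sum_{k=0}^\infty \lambda_k^s v_k u_k$. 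Comparing the three series-representations establishes \eqref{equ: integration by parts}.

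The only genuine subtlety, and the step I would treat with most care, is the justification that the duality pairing really is computed coefficient-wise as claimed — that is, that the $H^{-s}$--$H^s$ pairing of $\LC -\Delta_g \RC^s u$ with $v$ coincides with the $\ell^2$-type sum $\sum_k \lambda_k^s u_k v_k$. This is clear for $u, v$ in the finite span of eigenfunctions (where everything reduces to the $L^2$ inner product), and the general case follows by density of such finite sums in $H^s(M)$ together with the continuity of both sides: the left side is continuous because $\LC -\Delta_g \RC^s \colon H^s(M) \to H^{-s}(M)$ is bounded and the pairing is bounded, while the right side is continuous as a bilinear form on $H^s(M) \times H^s(M)$ by Cauchy--Schwarz on the weighted sequences $\big( (1+\lambda_k)^{s/2} u_k \big)$ and $\big( (1+\lambda_k)^{s/2} v_k \big)$. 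Everything else is a routine Parseval computation, so I would keep the write-up brief, emphasizing the spectral diagonalization and the density argument.
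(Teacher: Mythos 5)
Your proposal is correct and follows exactly the route the paper intends: the paper's proof is the one-line remark that the identity ``follows directly by straightforward computations with eigenvalue and eigenfunction decomposition,'' and your argument is precisely that computation carried out in detail, with all three expressions reduced to the series $\sum_k \lambda_k^s u_k v_k$ and the duality pairing justified by density of finite eigenfunction sums. Your added care about the coefficient-wise identification of the $H^{-s}$--$H^s$ pairing is a reasonable elaboration of what the paper leaves implicit.
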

	
	\begin{proof}
		This is followed directly by straightforward computations with eigenvalue and eigenfunction decomposition.
	\end{proof}
	
	\begin{lemma}[Well-posedness]\label{Lemma: well-posed}
		Given $f\in H^{-s}(M)$, let $0\leq V\in C^\infty(M)$, then there exists a unique solution $u\in H^s(M)$ to the equation \eqref{equ: main}. Moreover, if $f\in C^\infty(M)$, then $u\in C^\infty(M)$.
	\end{lemma}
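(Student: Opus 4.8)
The plan is to recast \eqref{equ: main} as a variational problem and invoke the Lax--Milgram theorem, followed by an elliptic bootstrap for the regularity claim. By Lemma~\ref{Lemma: integration by parts}, a function $u\in H^s(M)$ is a weak solution of \eqref{equ: main} in $H^{-s}(M)$ precisely when
\begin{equation*}
	B(u,v):=\big(\LC -\Delta_g\RC^{s/2}u,\LC -\Delta_g\RC^{s/2}v\big)_{L^2(M)}+\LC Vu,v\RC_{L^2(M)}=\LC f,v\RC_{H^{-s}(M)\times H^s(M)}
\end{equation*}
for all $v\in H^s(M)$. Boundedness of $B$ on $H^s(M)\times H^s(M)$ is immediate from Cauchy--Schwarz together with $\norm{\LC -\Delta_g\RC^{s/2}w}_{L^2(M)}\le\norm{w}_{H^s(M)}$ and $\norm{Vw}_{L^2(M)}\le\norm{V}_{L^\infty(M)}\norm{w}_{L^2(M)}$, while $v\mapsto\LC f,v\RC$ is a bounded linear functional on $H^s(M)$.

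The crux is the coercivity of $B$. Since $V\ge 0$ we have $B(u,u)\ge\norm{\LC -\Delta_g\RC^{s/2}u}_{L^2(M)}^2$, and via the spectral decomposition $\norm{u}_{H^s(M)}^2$ is equivalent to $\norm{u}_{L^2(M)}^2+\norm{\LC -\Delta_g\RC^{s/2}u}_{L^2(M)}^2$; moreover $\norm{\LC -\Delta_g\RC^{s/2}u}_{L^2(M)}^2\ge\lambda_1^s\norm{u-\LC u,\phi_0\RC_{L^2(M)}\phi_0}_{L^2(M)}^2$ with $\lambda_1>0$ by connectedness, so only the component of $u$ along the constant $\phi_0$ needs to be controlled, and this is exactly what the term $\LC Vu,u\RC_{L^2(M)}=\int_M Vu^2\,\dVg$ does. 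I would make this rigorous by contradiction and compactness: if $B(u,u)\ge c\norm{u}_{H^s(M)}^2$ failed for every $c>0$, pick $u_j$ with $\norm{u_j}_{H^s(M)}=1$ and $B(u_j,u_j)\to 0$; by the compact embedding $H^s(M)\hookrightarrow L^2(M)$ and reflexivity, along a subsequence $u_j\to u$ in $L^2(M)$ and $u_j\weak u$ in $H^s(M)$; weak lower semicontinuity gives $\norm{\LC -\Delta_g\RC^{s/2}u}_{L^2(M)}=0$, so $u$ is constant, and $\int_M Vu^2\,\dVg=\lim_j\int_M Vu_j^2\,\dVg=0$ forces $u\equiv 0$ since $\int_M V\,\dVg>0$; but then $\norm{u_j}_{H^s(M)}^2\to\norm{u}_{L^2(M)}^2+\lim_j\norm{\LC -\Delta_g\RC^{s/2}u_j}_{L^2(M)}^2=0$, contradicting $\norm{u_j}_{H^s(M)}=1$. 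This coercivity step is the main obstacle, and it is where the sign condition on $V$ (together with $V\not\equiv 0$) is used essentially. With boundedness and coercivity in hand, Lax--Milgram produces a unique $u\in H^s(M)$ solving $B(u,\cdot)=\LC f,\cdot\RC$.

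For the regularity assertion assume $f\in C^\infty(M)$. I would rewrite \eqref{equ: main} as $\LC\LC -\Delta_g\RC^s+1\RC u=f+u-Vu=:F$ and exploit that $\LC -\Delta_g\RC^s+1\colon H^{a+2s}(M)\to H^a(M)$ is an isomorphism for every $a\ge 0$, its eigenvalues $\lambda_k+1$ lying in $[1,\infty)$. As multiplication by the smooth function $V$ is bounded on $H^a(M)$ for all $a\ge 0$, a bootstrap follows: from $u\in H^s(M)$ we get $F\in H^s(M)$ and hence $u\in H^{3s}(M)$, then $F\in H^{3s}(M)$ and $u\in H^{5s}(M)$, and inductively $u\in H^{(2k+1)s}(M)$ for all $k\in\N$. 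Thus $u\in\bigcap_{a\ge 0}H^a(M)=C^\infty(M)$ by Sobolev embedding, and uniqueness in $C^\infty(M)$ is inherited from uniqueness in $H^s(M)$.
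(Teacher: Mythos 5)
Your proposal is correct and follows essentially the same route as the paper: the paper's proof merely introduces the bilinear form $B_{g,V}$ and invokes Lax--Milgram together with elliptic regularity, while you supply the boundedness, coercivity, and bootstrap details explicitly. One point worth recording: your compactness argument for coercivity correctly isolates that $V\not\equiv 0$ (so that $\int_M V\,\dVg>0$ controls the constant mode) is genuinely needed --- the hypothesis $0\le V$ alone does not guarantee this, and for $V\equiv 0$ the operator $(-\Delta_g)^s$ annihilates constants, so the lemma as literally stated would fail.
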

	
	\begin{proof}
		Consider the bilinear form associated with \eqref{equ: main} to be 
		\begin{equation}
			B_{g,V}(u,v):= \big( \LC -\Delta_g\RC^{s/2} u , \LC -\Delta_g \RC^{s/2} v \big)_{L^2(M)}+\LC Vu,v\RC_{L^2(M)},
		\end{equation}
		for any $u,v\in H^s(M)$.
		Using $V\geq 0$ in $M$, the standard Lax-Milgram theorem and later statement can be seen by using elliptic regularity theory.
	\end{proof}

	With the above lemma at hand, one can define the local source-to-solution map $L_{V,\mathcal{O}}$ of \eqref{equ: main} rigorously, which is exactly given by \eqref{local source-to-solution map}. We next derive a useful integral identity.

	\begin{lemma}
		Let $V,V_j\in C^\infty(M)$, for $j=1,2$, and $\mathcal{O}\subset M$ be a nonempty open set, then 
		\begin{itemize}
			\item[(i)] $\ $ (Self-adjointness). There holds 
			\begin{equation}\label{self-adjoint}
				\LC f_1 , L_{V,\mathcal{O}}f_2 \RC_{L^2(M)}=	\LC  L_{V,\mathcal{O}}f_1, f_2 \RC_{L^2(M)},
			\end{equation}
			
			\item[(ii)]$\ $ (Integral identity). Let $V_1,V_2\in C^\infty(M)$, then there holds that 
			\begin{equation}\label{integral identity}
				\begin{split}
					\LC	\LC  L_{V_1,\mathcal{O}} -  L_{V_2,\mathcal{O}} \RC f_1, f_2 \RC_{L^2(M)}= -\big( \LC V_1 -V_2 \RC u_{1}^{f_1}, u_{2}^{f_2}\big)_{L^2(M)},
				\end{split}
			\end{equation}
			for all $f_1,f_2 \in C^\infty_c (\mathcal{O})$, where $u_j^{f_j}$ are the solutions to 
			\begin{equation}\label{equ: main j=1,2}
				\LC \LC -\Delta_g\RC^s +V_j\RC u_j^{f_j}=f_j \text{ in }M, \text{ for }j=1,2.
			\end{equation}
		\end{itemize}
	\end{lemma}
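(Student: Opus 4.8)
The plan is to deduce both identities from the weak formulation of \eqref{equ: main j=1,2} combined with the symmetric integration by parts formula \eqref{equ: integration by parts}. Since every source below lies in $C^\infty_c(\mathcal{O})$, Lemma \ref{Lemma: well-posed} guarantees that the corresponding solutions belong to $C^\infty(M)$, so all $L^2(M)$ pairings that appear are well defined and finite; moreover the support condition lets us replace the restricted pairing on $\mathcal{O}$ by the global one on $M$ whenever one of the two entries is such a source.

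For part (i), write $u_j\in C^\infty(M)$ for the solution of $\LC \LC -\Delta_g\RC^s+V\RC u_j=f_j$, $j=1,2$, so that $L_{V,\mathcal{O}}f_j=u_j|_{\mathcal{O}}$. Because $\supp(f_1)\subset\mathcal{O}$ we have $\LC f_1,L_{V,\mathcal{O}}f_2\RC_{L^2(M)}=\LC f_1,u_2\RC_{L^2(M)}$; now substitute $f_1=\LC \LC -\Delta_g\RC^s+V\RC u_1$ and apply \eqref{equ: integration by parts} to obtain
\begin{equation}
	\LC f_1,L_{V,\mathcal{O}}f_2\RC_{L^2(M)}=\big(\LC -\Delta_g\RC^{s/2}u_1,\LC -\Delta_g\RC^{s/2}u_2\big)_{L^2(M)}+\LC Vu_1,u_2\RC_{L^2(M)}.
\end{equation}
The right-hand side is symmetric under $1\leftrightarrow 2$, and running the same computation in reverse with the indices exchanged gives $\LC L_{V,\mathcal{O}}f_1,f_2\RC_{L^2(M)}$ (using that the $L^2$ inner product is symmetric for real-valued functions), which is \eqref{self-adjoint}.

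For part (ii), let $u_j^{f_j}$ solve \eqref{equ: main j=1,2} with potential $V_j$ and source $f_j$, and let $w$ solve $\LC \LC -\Delta_g\RC^s+V_2\RC w=f_1$, so $L_{V_2,\mathcal{O}}f_1=w|_{\mathcal{O}}$. Using $\supp(f_2)\subset\mathcal{O}$ and then the self-adjointness from part (i) applied to the potential $V_2$,
\begin{equation}
	\LC L_{V_2,\mathcal{O}}f_1,f_2\RC_{L^2(M)}=\LC w,f_2\RC_{L^2(M)}=\LC f_1,u_2^{f_2}\RC_{L^2(M)},
\end{equation}
while $\LC L_{V_1,\mathcal{O}}f_1,f_2\RC_{L^2(M)}=\LC u_1^{f_1},f_2\RC_{L^2(M)}$. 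In the last expression substitute $f_2=\LC \LC -\Delta_g\RC^s+V_2\RC u_2^{f_2}$, and in the previous one substitute $f_1=\LC \LC -\Delta_g\RC^s+V_1\RC u_1^{f_1}$; applying \eqref{equ: integration by parts} to both, the quadratic terms $\big(\LC -\Delta_g\RC^{s/2}u_1^{f_1},\LC -\Delta_g\RC^{s/2}u_2^{f_2}\big)_{L^2(M)}$ cancel upon subtraction, leaving $\LC V_2 u_1^{f_1},u_2^{f_2}\RC_{L^2(M)}-\LC V_1 u_1^{f_1},u_2^{f_2}\RC_{L^2(M)}$, which is exactly the right-hand side of \eqref{integral identity}.

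The argument is essentially bookkeeping and I do not expect a genuine analytic obstacle: smoothness of all intermediate solutions is already supplied by Lemma \ref{Lemma: well-posed}, and the only things that need care are keeping track of which potential and which source each auxiliary solution is attached to, and the correct use of $f_j\in C^\infty_c(\mathcal{O})$ to pass between pairings on $\mathcal{O}$ and on $M$.
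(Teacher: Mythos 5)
Your proposal is correct and follows essentially the same route as the paper: for (i) you reduce both sides to the symmetric bilinear form $\big(\LC -\Delta_g\RC^{s/2}u_1,\LC -\Delta_g\RC^{s/2}u_2\big)_{L^2(M)}+\LC Vu_1,u_2\RC_{L^2(M)}$ via Lemma \ref{Lemma: integration by parts}, and for (ii) you use the self-adjointness from (i) to flip the $L_{V_2,\mathcal{O}}$ term and then cancel the fractional Laplacian contributions, exactly as in the paper's computation. No gaps.
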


	\begin{proof}
		We first prove the self-adjointness of the local source-to-solution map $L_{V,\mathcal{O}}$. For any $f_1,f_2\in C^\infty_c(\mathcal{O})$, direct computations imply that 
		\begin{equation}
			\begin{split}
				\LC L_{V,\mathcal{O}}f_1, f_2 \RC_{L^2(M)} &= \LC L_{V,\mathcal{O}}f_1, f_2 \RC_{L^2(\mathcal{O})} = \LC u_1^{f_1},f_2 \RC_{L^2(M)}\\
				&= \LC u_1^{f_1}, \LC \LC -\Delta_g \RC^s +V \RC u_2^{f_2}\RC_{L^2(M)}\\
				&= \underbrace{ \LC  \LC \LC -\Delta_g \RC^s +V \RC u_1^{f_1}, u_2^{f_2}\RC_{L^2(M)}}_{\text{By Lemma \ref{Lemma: integration by parts}}} \\
				&=\LC f_1, L_{V,\mathcal{O}} f_2 \RC_{L^2(M)} .
			\end{split}
		\end{equation}
		By using the adjointness \eqref{self-adjoint}, the integral identity \eqref{integral identity} can be seen via 
		\begin{equation}
			\begin{split}
				&\quad \, \LC	\LC  L_{V_1,\mathcal{O}} -  L_{V_2,\mathcal{O}} \RC f_1, f_2 \RC_{L^2(M)} \\
				&= \LC L_{V_1,\mathcal{O}}  f_1, f_2 \RC_{L^2(M)} - \LC f_1 ,L_{V_2,\mathcal{O}}f_2 \RC_{L^2(M)} \\
				&= \underbrace{\LC u_1, \LC \LC -\Delta_g\RC^s +V_2 \RC u_2\RC_{L^2(M)}- \LC  \LC \LC -\Delta_g\RC^s +V_1 \RC u_1, u_2\RC_{L^2(M)}}_{\text{By \eqref{equ: main j=1,2}}}\\
				&=\underbrace{ -\LC \LC V_1 -V_2 \RC u_{1} , u_{2}\RC_{L^2(M)}}_{\text{By Lemma \ref{Lemma: integration by parts}}},
			\end{split}
		\end{equation}
		which proves the assertion.
	\end{proof}

	Let us also recall a well-known unique continuation property (UCP) for the fractional Laplace-Beltrami operator.
	
	\begin{lemma}[UCP]\label{Lemma: UCP}
		Let $\emptyset \neq \mathcal{O}\subset M$ be an open subset, then $u=\LC -\Delta_g \RC^s u =0 $ in $\mathcal{O}$ implies $u\equiv 0$.
	\end{lemma}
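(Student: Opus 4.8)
The plan is to reduce this nonlocal statement to a local one via the Caffarelli--Silvestre type extension (the Stinga--Torrea characterization on $(M,g)$) and then to invoke a strong unique continuation principle for the associated degenerate elliptic equation. First, for $u\in H^s(M)$ I would introduce its extension $w=w(x,y)$ on the cylinder $M\times(0,\infty)$ as the solution of
\[
\Div_{x,y}\big(y^{1-2s}\nabla_{x,y}w\big)=0\ \text{ in }M\times(0,\infty),\qquad w(\cdot,0)=u\ \text{ on }M,
\]
which in coordinates reads $\partial_y^2 w+\tfrac{1-2s}{y}\partial_y w+\Delta_g w=0$. I would use two standard facts: this problem is well posed in the weighted Sobolev space determined by the Muckenhoupt $A_2$ weight $y^{1-2s}$ (expanding $u$ in the basis $\{\phi_k\}$ and solving the resulting Bessel-type ODEs mode by mode makes this explicit, the $k=0$ mode being a constant because $\lambda_0=0$), and the weighted Dirichlet-to-Neumann map of the extension recovers the fractional Laplacian, $-c_s\lim_{y\to 0^+}y^{1-2s}\partial_y w(\cdot,y)=\LC-\Delta_g\RC^s u$ for an explicit constant $c_s>0$.

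Next I would translate the hypothesis. Since $u=0$ and $\LC-\Delta_g\RC^s u=0$ in $\mathcal O$, the extension $w$ has simultaneously vanishing Dirichlet trace and vanishing weighted conormal derivative on the boundary piece $\mathcal O\times\{0\}$, i.e.\ $w$ has vanishing Cauchy data there. The crux is then a boundary strong unique continuation statement across $\mathcal O\times\{0\}$ for the degenerate operator $\mathcal L_s w:=\Div_{x,y}(y^{1-2s}\nabla_{x,y}w)$: a solution of $\mathcal L_s w=0$ with vanishing Cauchy data on $\mathcal O\times\{0\}$ must vanish in a half-neighborhood, inside $\overline{M\times(0,\infty)}$, of every point of $\mathcal O\times\{0\}$. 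I would establish this from a Carleman estimate adapted to the weight $y^{1-2s}$ (in the spirit of the Euclidean arguments of R\"uland and of Ghosh--Salo--Uhlmann), or equivalently from an Almgren-type frequency-function monotonicity formula for $\mathcal L_s$ in the style of Caffarelli--Silvestre and Fall--Felli; the Riemannian geometry of $M$ enters only through lower-order, locally bounded perturbations and does not affect the mechanism.

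Finally, knowing that $w$ vanishes on a nonempty open subset of the connected manifold-with-boundary $M\times[0,\infty)$, I would propagate the zero set: on the interior $M\times(0,\infty)$ the equation is uniformly elliptic with smooth coefficients and the domain is connected (here I use that $M$ is connected), so the classical weak unique continuation principle gives $w\equiv 0$ on $M\times(0,\infty)$, whence $u=w(\cdot,0)\equiv 0$ on $M$. The main obstacle is the middle step: the degeneracy of the coefficient $y^{1-2s}$ at $y=0$ is precisely where the usual Carleman/monotonicity machinery has to be reworked, and it is what makes the fractional (nonlocal) UCP genuinely harder than unique continuation for $-\Delta_g$ itself; since this boundary SUCP has already been carried out in the literature cited above, in the paper it suffices to quote it.
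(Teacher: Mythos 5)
The paper does not actually prove this lemma: it states it and defers entirely to the literature, remarking that ``the proof can be found in \cite{GLX,feizmohammadi2021fractional} with different approaches.'' Your proposal is, in substance, a faithful reconstruction of the first of those two routes --- the Caffarelli--Silvestre/Stinga--Torrea extension to the weighted degenerate-elliptic problem on $M\times(0,\infty)$, vanishing Cauchy data on $\mathcal{O}\times\{0\}$, a boundary strong unique continuation statement for $\Div(y^{1-2s}\nabla\cdot)$ via Carleman or Almgren-frequency arguments, and then interior weak UCP on the connected cylinder --- so it is a legitimate proof outline, with the honest caveat (which you acknowledge) that the analytic core is quoted rather than proved, exactly as the paper itself does. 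Two remarks. First, the alternative approach alluded to in \cite{feizmohammadi2021fractional} avoids the extension altogether and argues through the heat semigroup representation of $(-\Delta_g)^s$ combined with a Kannai-type transmutation to the wave equation and finite propagation speed; that route is arguably better adapted to the closed-manifold setting since it never leaves $M$ and sidesteps the well-posedness bookkeeping for the weighted cylinder problem, whereas your extension route has the advantage of connecting directly to the local Calder\'on problem (cf.\ \cite{CGRU2023reduction}). Second, a small inaccuracy: the Riemannian metric does \emph{not} enter the extension operator only through lower-order terms --- in local coordinates the principal part is $\Div_{x,y}(y^{1-2s}\sqrt{|g|}\,g^{jk}\partial_k\cdot)$, so the leading coefficients are variable in $x$; the boundary SUCP you need is therefore the variable-coefficient version (Lipschitz or $C^1$ leading part, as in R\"uland's Carleman estimates and as used in \cite{GLX}), not the constant-coefficient one. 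This does not break the argument, since those versions are available, but the claim as phrased would not justify the step.
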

	
	The proof can be found in \cite{GLX,feizmohammadi2021fractional} with different approaches.

	\section{Proof of main results}\label{sec: proof}
	
	To prove Theorem \ref{Thm: potential}, we first show a useful approximation result.
	
	\begin{lemma}[Runge approximation]\label{Lemma: runge}
		Consider the set 
		\begin{equation}
			\mathcal{R}:=\left\{ \left. u_f \right|_{\mathcal{O}} :\, \text{for any }f\in C^\infty_c(\mathcal{O}) \right\},
		\end{equation}
		where $u_f$ is the solution to \eqref{equ: main}. Then $\mathcal{R}$ is dense in $H^s(\mathcal{O})$.
	\end{lemma}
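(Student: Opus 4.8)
The plan is to argue by duality. The subspace $\mathcal{R}\subset H^s(\mathcal{O})$ is dense if and only if its annihilator in the dual space is trivial, so by Hahn--Banach it suffices to prove the following: if $v\in H^{-s}(M)$ satisfies $\langle v,w\rangle_{H^{-s}(M)\times H^s(M)}=0$ for every $w\in H^s(M)$ with $w|_{\mathcal{O}}=0$ (this is exactly what it means for $v$ to represent a continuous functional on $H^s(\mathcal{O})$, using that the restriction map $H^s(M)\to H^s(\mathcal{O})$ is onto, say for $\mathcal{O}$ Lipschitz), and if in addition $\langle v,u_f\rangle_{H^{-s}(M)\times H^s(M)}=0$ for all $f\in C^\infty_c(\mathcal{O})$ (which is the statement that $v$ annihilates $\mathcal{R}$, since $u_f\in H^s(M)$ extends $u_f|_{\mathcal{O}}$), then $v=0$.

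To use the second hypothesis I would introduce the adjoint solution. By Lemma~\ref{Lemma: well-posed}, which admits sources in $H^{-s}(M)$, let $\phi\in H^s(M)$ solve $\left((-\Delta_g)^s+V\right)\phi=v$ in $M$. Then for every $f\in C^\infty_c(\mathcal{O})$, using Lemma~\ref{Lemma: integration by parts} on the fractional part and the symmetry of multiplication by $V$,
\[
0=\langle v,u_f\rangle=\Big\langle\left((-\Delta_g)^s+V\right)\phi,\,u_f\Big\rangle=\Big\langle\phi,\,\left((-\Delta_g)^s+V\right)u_f\Big\rangle=\langle\phi,f\rangle=\int_{\mathcal{O}}\phi\,f\,\dVg .
\]
Since $f$ is arbitrary in $C^\infty_c(\mathcal{O})$, this forces $\phi=0$ in $\mathcal{O}$.

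The decisive step is then short. Because $\phi\in H^s(M)$ and $\phi|_{\mathcal{O}}=0$, the first hypothesis on $v$ applies with $w=\phi$ and yields $\langle v,\phi\rangle=0$. Testing the adjoint equation against $\phi$ and invoking Lemma~\ref{Lemma: integration by parts} once more,
\[
0=\langle v,\phi\rangle=\big\|(-\Delta_g)^{s/2}\phi\big\|^2_{L^2(M)}+\int_M V\,\phi^2\,\dVg ,
\]
and since both terms are nonnegative we get $(-\Delta_g)^{s/2}\phi=0$. By the spectral decomposition, together with $\lambda_k>0$ for $k\geq1$ (i.e.\ connectedness of $M$), the function $\phi$ is constant; as it vanishes on the nonempty open set $\mathcal{O}$ we conclude $\phi\equiv0$, hence $v=\left((-\Delta_g)^s+V\right)\phi=0$. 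This proves the lemma.

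I expect the genuine work to lie in the functional-analytic set-up rather than in any estimate: pinning down the realization of $\left(H^s(\mathcal{O})\right)^{\ast}$ so that annihilating $\mathcal{R}$ becomes the vanishing of $\langle v,u_f\rangle$ over all of $M$, and so that $\phi|_{\mathcal{O}}=0$ instantly returns $\langle v,\phi\rangle=0$; one must also check that the distributional adjoint source $v\in H^{-s}(M)$ is covered by the well-posedness in Lemma~\ref{Lemma: well-posed}, which is precisely why that lemma is phrased for negative-order data. Note that, unlike in the exterior-data situation, the argument above needs no unique continuation: it relies only on the coercivity of the bilinear form $B_{g,V}$ and the connectedness of $M$, the latter guaranteeing that $\ker(-\Delta_g)^{s/2}$ consists of constants.
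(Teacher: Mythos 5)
Your proposal is correct and follows essentially the same route as the paper: Hahn--Banach duality, the adjoint equation $((-\Delta_g)^s+V)\phi=v$, deducing $\phi|_{\mathcal{O}}=0$, and then the nonnegative quadratic-form identity to kill $\phi$. The only difference is cosmetic --- you spell out the step that $(-\Delta_g)^{s/2}\phi=0$ forces $\phi$ to be constant (hence zero, as it vanishes on $\mathcal{O}$), a detail the paper's proof passes over more quickly.
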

	
	\begin{proof}
	    By Lemma \ref{Lemma: well-posed}, the solution $u_f$ of \eqref{equ: main} belongs to $H^s(M)$ for any $f\in H^{-s}(M)$, then we have $\mathcal{R}\subset H^s(\mathcal{O})$.	    
		Using the Hahn-Banach theorem, consider $\varphi \in \wt H^{-s}(\mathcal{O})= (H^s(\mathcal{O}))^\ast$ (dual space of $ H^s(\mathcal{O})$), which satisfies 
		\begin{equation}
			\varphi \LC u_f \RC=0, \text{ for all } f\in C^\infty_c(\mathcal{O}),
		\end{equation} 
		then we aim to show prove $\varphi \equiv 0$. 	 
		Let $w\in H^s(M)$ be the solution to 
		\begin{equation}\label{adjoint equation}
			\LC \LC -\Delta_g \RC^s +V \RC w =\varphi  \text{ in }M,
		\end{equation}
		then we have 
		\begin{equation}
			\begin{split}
				0&=\left\langle \varphi,  u_f \right\rangle_{\wt H^{-s}(\mathcal{O})\times H^s(\mathcal{O})} =\left\langle \LC \LC -\Delta_g \RC^s +V \RC w ,u_f \right\rangle_{H^{-s}(M)\times H^s(M)}\\
				&= \underbrace{\LC \LC \LC -\Delta_g \RC^s +V \RC u_f, w  \RC_{L^2(M)}}_{\text{By Lemma \ref{Lemma: integration by parts}}} = \LC f,w\RC_{L^2(M)}=\LC f,w\RC_{L^2(\mathcal{O})},
			\end{split}
		\end{equation}
		for any $f\in C^\infty_c(\mathcal{O})$. This implies $w=0$ in $\mathcal{O}$. 
		
		On the other hand, multiplying \eqref{adjoint equation} by the function $w$, one has 
		\begin{equation}
				\begin{split}
					\left\langle \varphi ,w \right\rangle_{H^{-s}(M)\times H^s(M)}&=\int_M w \LC \LC-\Delta_g \RC^s + V \RC w\, \dVg\\
				&=\underbrace{\int_M \big( \big| \LC -\Delta_g \RC^{s/2} w \big|^2 + V |w|^2 \big) \,  \dVg \geq 0}_{\text{By using }V\geq 0 \text{ in }M},
			\end{split}
		\end{equation}
		which implies 
		\begin{equation}
			\begin{split}
				\underbrace{\langle \varphi, w \rangle_{H^{-s}(M)\times H^s(M)}=\langle \varphi, w \rangle_{\wt H^{-s}(\mathcal{O})\times H^s(\mathcal{O})}=0}_{\text{By using }\varphi \in \wt H^{-s}(\mathcal{O}) \text{ and }w=0 \text{ in }\mathcal{O}}.
			\end{split}
		\end{equation} 
		Hence, we have $0\leq \int_M \LC \left| \LC -\Delta_g \RC^s w \right|^2 + V |w|^2 \RC \dVg=0$, such that $w$ must be zero in $M$, which yields that $0=\varphi\in \wt H^{-s}(\mathcal{O})$ by using the adjoint equation \eqref{adjoint equation}.
	\end{proof}

\begin{proof}[Proof of Theorem \ref{Thm: potential}]
	For any $f_1,f_2 \in C^\infty_c (\mathcal{O})$, let $u_j$ be the solution of \eqref{equ: main j=1,2}, for $j=1,2$. Let us first consider $V_1=V_2$ in $M\setminus \mathcal{O}$, combining with \eqref{s-t-s potential} and \eqref{integral identity}, then we can obtain 
	\begin{equation}\label{integral id=0}
		\int_{\mathcal{O}} \LC V_1-V_2 \RC u_{1}^{f_1}u_2^{f_2}  \, \dVg =	\int_{M} \LC V_1-V_2 \RC u_{1}^{f_1}u_2^{f_2}  \, \dVg =0.
	\end{equation}
	By Lemma \ref{Lemma: runge}, given any $h\in H^s(\mathcal{O})$, one can find sequences of functions $\big( f_1^{f_1^{(k)}}\big)_{k\in \N}$, $\big( f_2^{(\ell)}\big)_{\ell \in \N}$ such that the corresponding solutions satisfy $u_1^{{(k)}}\to h$ and $u_2^{f_2^{(\ell)}}\to 1$ in $H^s(\mathcal{O})$, as $k,\ell \to \infty$. Now, inserting these sequences of solutions into \eqref{integral id=0} and taking limits, there holds that 
	\begin{equation*}
		\int_{\mathcal{O}}\LC V_1-V_2 \RC h \, \dVg =0.
	\end{equation*}
	Due to arbitrariness of $h\in H^s(\mathcal{O})$, there holds $V_1=V_2$ in $\mathcal{O}$ so that $V_1=V_2$ in $M$. 
	
	On the other hand, if $V_1=V_2$ in $\mathcal{O}$, then the condition \eqref{s-t-s potential} implies that $(-\Delta_g)^s\big(  u_1^f-u_2^f \big) =u_1^f -u_2^f=0$ in $\mathcal{O}$. The UCP implies $u^f:=u_1^f=u_2^f$ in $M$ for any $f\in C^\infty_c(\mathcal{O})$. This implies $\LC V_1 -V_2 \RC u^f=0 $ in $M\setminus \mathcal{O}$. Similar to the arguments as in \cite{GRSU18,LZ2024NDOT}, by the smoothness of $u^f$ and UCP for $(-\Delta_g)^s$, it is not hard to show $V_1=V_2$ in $M\setminus \mathcal{O}$. This implies $V_1=V_2$ in $M$ as well. 
\end{proof}

\noindent \textbf{Acknowledgment.}
	Y.-H. Lin is partially supported by the National Science and Technology Council (NSTC) Taiwan, under project 113-2628-M-A49-003. Y.-H. Lin is also a Humboldt research fellow.

	
	
	

\begin{thebibliography} {99}
		
		
		
		\bibitem[CGRU23]{CGRU2023reduction}
		Giovanni Covi, Tuhin Ghosh, Angkana R{\"u}land, and Gunther Uhlmann.
		\newblock A reduction of the fractional {C}alder\'on problem to the local
		{C}alder\'on problem by means of the {C}affarelli-{S}ilvestre extension.
		\newblock {\em arXiv preprint arXiv:2305.04227}, 2023.
		
		
		\bibitem[CLR20]{cekic2020calderon}
		Mihajlo Cekic, Yi-Hsuan Lin, and Angkana R{\"u}land.
		\newblock The {C}alder{\'o}n problem for the fractional {S}chr{\"o}dinger
		equation with drift.
		\newblock {\em Cal. Var. Partial Differential Equations}, 59(91), 2020.
		
		
		\bibitem[CLL19]{CLL2017simultaneously}
		Xinlin Cao, Yi-Hsuan Lin, and Hongyu Liu.
		\newblock Simultaneously recovering potentials and embedded obstacles for
		anisotropic fractional {S}chr\"{o}dinger operators.
		\newblock {\em Inverse Probl. Imaging}, 13(1):197--210, 2019.
		
		
		\bibitem[CRTZ24]{CRTZ-2022}
		Giovanni Covi, Jesse Railo, Teemu Tyni, and Philipp Zimmermann.
		\newblock Stability estimates for the inverse fractional conductivity problem.
		\newblock {\em SIAM Journal on Mathematical Analysis}, 56(2):2456--2487, 2024.
		
		\bibitem[FGKU24]{feizmohammadi2021fractional}
		Ali Feizmohammadi, Tuhin Ghosh, Katya Krupchyk, and Gunther Uhlmann.
		\newblock Fractional anisotropic {C}alder\'on problem on closed {R}iemannian
		manifolds.
		\newblock {\em J. Diff. Geom., accepted for publication}, 2024.
		
		\bibitem[FKU24]{FKU2024calder}
		Ali Feizmohammadi, Katya Krupchyk, and Gunther Uhlmann.
		\newblock Calder\'on problem for fractional {S}chr\"odinger operators on closed
		{R}iemannian manifolds.
		\newblock {\em arXiv preprint arXiv:2407.16866}, 2024.
		
		\bibitem[GSU20]{GSU20}
		Tuhin Ghosh, Mikko Salo, and Gunther Uhlmann.
		\newblock The {C}alder\'{o}n problem for the fractional {S}chr\"{o}dinger
		equation.
		\newblock {\em Anal. PDE}, 13(2):455--475, 2020.
		
		
		
		\bibitem[GLX17]{GLX}
		Tuhin Ghosh, Yi-Hsuan Lin, and Jingni Xiao.
		\newblock The {C}alder\'{o}n problem for variable coefficients nonlocal
		elliptic operators.
		\newblock {\em Comm. Partial Differential Equations}, 42(12):1923--1961, 2017.
		
		
		\bibitem[GRSU20]{GRSU18}
		Tuhin Ghosh, Angkana R\"{u}land, Mikko Salo, and Gunther Uhlmann.
		\newblock Uniqueness and reconstruction for the fractional {C}alder\'{o}n
		problem with a single measurement.
		\newblock {\em J. Funct. Anal.}, 279(1):108505, 42, 2020.
		
		
		\bibitem[KLW22]{KLW2021calder}
		Pu-Zhao Kow, Yi-Hsuan Lin, and Jenn-Nan Wang.
		\newblock The {C}alder\'{o}n problem for the fractional wave equation:
		uniqueness and optimal stability.
		\newblock {\em SIAM J. Math. Anal.}, 54(3):3379--3419, 2022.
		
		\bibitem[KLZ24]{KLZ-2022}
		Manas Kar, Yi-Hsuan Lin, and Philipp Zimmermann.
		\newblock Determining coefficients for a fractional $p$-Laplace equation from
		exterior measurements.
		\newblock {\em J. Differential Equations, accepted for publication}, 2024.
		
		
		\bibitem[KRZ23]{KRZ-2023}
		Manas Kar, Jesse Railo, and Philipp Zimmermann.
		\newblock The fractional {$p\,$}-biharmonic systems: optimal {P}oincar\'{e}
		constants, unique continuation and inverse problems.
		\newblock {\em Calc. Var. Partial Differential Equations}, 62(4):Paper No. 130,
		36, 2023.
		
		\bibitem[LLU23]{LLU2023calder}
		Ching-Lung Lin, Yi-Hsuan Lin, and Gunther Uhlmann.
		\newblock The {C}alder\'{o}n problem for nonlocal parabolic operators: {A} new
		reduction from the nonlocal to the local.
		\newblock {\em arXiv preprint arXiv:2308.09654}, 2023.
		
		\bibitem[LNZ24]{LNZ2024}
		Yi-Hsuan Lin, Gen Nakamura and Philipp Zimmermann.
		\newblock The {C}alder\'on problem for the {S}chr\"odinger equation in transversally anisotropic geometries with partial data.
		\newblock {\em arXiv preprint arXiv:2406.06226}, 2024.
		
		
		\bibitem[LZ24]{LZ2024NDOT}
		Yi-Hsuan Lin and Philipp Zimmermann.
		\newblock Approximation and uniqueness results for the nonlocal diffuse optical tomography problem.
		\newblock {\em arXiv preprint arXiv:2406.06226}, 2024.
		
		
		
		\bibitem[RS20]{RS17}
		Angkana R\"{u}land and Mikko Salo.
		\newblock The fractional {C}alder\'{o}n problem: low regularity and stability.
		\newblock {\em Nonlinear Anal.}, 193:111529, 56, 2020.
		
		
		\bibitem[RZ23]{RZ2022unboundedFracCald}
		Jesse Railo and Philipp Zimmermann.
		\newblock Fractional {C}alder\'{o}n problems and {P}oincar\'{e} inequalities on
		unbounded domains.
		\newblock {\em J. Spectr. Theory}, 13(1):63--131, 2023.
		
		
		
	\end{thebibliography}
	\end{document}